\documentclass[11pt]{amsart} 
\usepackage{enumerate} 
\usepackage{amsmath,amsfonts, amscd,  eucal, latexsym, amssymb,   rotating}
 \usepackage[utf8]{inputenc} 
 \usepackage[T1]{fontenc}
 \usepackage{mathrsfs}

\DeclareFontFamily{OT1}{pzc}{}
\DeclareFontShape{OT1}{pzc}{m}{it}{<-> s * [1.200] pzcmi7t}{}
\DeclareMathAlphabet{\mathpzc}{OT1}{pzc}{m}{it}

\usepackage{tikz-cd}

 \usepackage[colorlinks=true, pdfstartview=FitV,
linkcolor=cyan, citecolor=red, urlcolor=blue]{hyperref}

\newif\ifPDF
\ifx\pdfoutput\undefined 
        \ifx\pdfversion\undefined
                \PDFfalse
        \fi
\else 
        \ifnum\pdfoutput > 0
                \PDFtrue
        \else
                \PDFfalse
        \fi
\fi

\makeatletter
\newcommand{\xRightarrow}[2][]{\ext@arrow 0359\Rightarrowfill@{#1}{#2}}
\makeatother

\theoremstyle{plain}
				
\newtheorem{theorem}{Theorem}[section]				
\newtheorem{proposition}[theorem]{Proposition}		
\newtheorem{corollary}[theorem]{Corollary}
\newtheorem{lemma}[theorem]{Lemma}

\theoremstyle{definition}

\newtheorem{definition}[theorem]{Definition}

\newcommand{\Mbold}{{\bf M}}

\newcommand{\Vbold}{{\bf V}}

\newcommand{\ebold}{{\bf e}}

\newcommand{\vbold}{{\bf v}}

\newcommand{\CBbb}{\mathbb C}

\newcommand{\RBbb}{\mathbb R}

\newcommand{\Acal}{\mathcal A}

\newcommand{\Gcal}{\mathcal G}

\newcommand{\Jcal}{\mathcal J}

\newcommand{\Ocal}{\mathcal O}

\newcommand{\Vcal}{\mathcal V}

\newcommand{\SL}{\mathsf{SL}}

\newcommand{\slfrak}{\mathfrak{sl}}

\DeclareMathOperator{\ad}{ad}
\DeclareMathOperator{\Ad}{Ad}
\DeclareMathOperator{\tr}{tr}

\newcommand{\dbar}{\bar\partial}

\newcommand{\lra}{\longrightarrow}

\newcommand{\Op}{{\bf Op}}

\newcommand{\isorightarrow}{\xrightarrow{
   \,\smash{\raisebox{-0.5ex}{\ensuremath{\sim}}}\,}}

\makeatletter
\newcommand*\bigcdot{\mathpalette\bigcdot@{.5}}
\newcommand*\bigcdot@[2]{\mathbin{\vcenter{\hbox{\scalebox{#2}{$\m@th#1\bullet$}}}}}
\makeatother

\makeatletter
\newcommand{\thickcolon}{\mathpalette\thick@colon\relax}
\newcommand{\thick@colon}[2]{%
  \mspace{1mu}%
  \vbox{%
    \hbox{$\m@th#1\bigcdot$}
    \nointerlineskip
    \kern.15ex
    \hbox{$\m@th#1\bigcdot$}
    \kern-.55ex
  }%
  \mspace{1mu}%
}
\makeatother

\makeatletter
\newcommand{\thickcirc}{\mathpalette\thick@circ\relax}
\newcommand{\thick@circ}[2]{%
  \mspace{1mu}%
  \vbox{%
    \hbox{$\m@th#1\circ$}
    \nointerlineskip
    \kern.15ex
    \hbox{$\m@th#1\circ$}
    \kern-.55ex
  }%
  \mspace{1mu}%
}
\makeatother

\makeatletter
\newcommand*\bigldot{\mathpalette\bigldot@{.5}}
\newcommand*\bigldot@[2]{\mathbin{\hbox{\scalebox{#2}{$\m@th#1\bullet$}}}}
\makeatother

\newcommand*{\dt}[1]{\overset{\bigldot}{#1}}

\begin{document}

\title[Discreteness of real $\SL_n$-opers]{Discreteness of real $\SL_n$-opers}
	
\author[Dimakis]{Panagiotis Dimakis}

\author[Wentworth]{Richard A. Wentworth}

\address{Department of Mathematics,
   University of Maryland,
   College Park, MD 20742, USA}
\email{pdimakis12345@gmail.com}
\email{raw@umd.edu}
\thanks{R.W. supported in part by NSF grant DMS-2506596.}

\subjclass[2020]{Primary: 14H60, 53D30; Secondary: 30F10, 34M05}

\begin{abstract}
 We prove that the set of real $\SL_n$-opers forms a discrete subset of the moduli space
 of irreducible flat connections.
\end{abstract}

\maketitle

\allowdisplaybreaks

\thispagestyle{empty}

\section{Statement of the result}
Fix a compact Riemann surface $X$ of genus $g\geq 2$. Let $\Mbold_n$
denote the deRham moduli space of flat $\SL_n(\CBbb)$ connections on $X$
(see \cite{Simpson:94b}). 
This is a complex analytic variety of dimension $2(n^2-1)(g-1)$, and it is smooth at
the irreducible connections $\Mbold_n^{irr.}$.
There is an antiholomorphic involution on $\Mbold_n$ whose fixed point set
$\Mbold_n^\RBbb$ consists of connections with real holonomy. 

Let $\Op_n\subset \Mbold_n$ denote the set of $\SL_n$-opers \cite{BD:05}. Then $\Op_n$ 
is a half-dimensional submanifold of $\Mbold_n^{irr.}$. It  has $n^{2g}$
components, and each is naturally an affine space over the sum $\bigoplus_{k=2}^n
H^0(X,K_X^{\otimes k})$ of holomorphic $k$-differentials on $X$ (see
\cite{Wentworth:16} for the proofs of some of these facts in the spirit of
this paper).   
It is a theorem of Faltings \cite{Faltings:83} that for $n=2$, the intersection $\Op_2\cap
\Mbold_2^\RBbb$ is discrete (see also \cite{Goldman:87}).
The rank $n$ version of this statement was conjectured in \cite[Conj.\
1.11(1)]{EFK:23}. A partial result was obtained in \cite{Wakabayashi} for
opers arising from the principal $\slfrak_2$-embedding.  We shall
refer to points
in the intersection $\Op_n^\RBbb:=\Op_n\cap \Mbold_n^\RBbb$
as \emph{real opers}. The goal of this note
is to prove the following\footnote{During the writing of this paper
Prof.\ Etingof announced a proof of Corollary \ref{cor:main} in a talk in
Madrid and kindly shared the preprint \cite{EFK:26} with us.}. 

\begin{theorem} \label{thm:main}
    Let $[\nabla]$ be a real oper. Then the intersection $\Op_n\cap \Mbold_n^\RBbb$ is transverse at $[\nabla]$. 
\end{theorem}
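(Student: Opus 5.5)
Throughout, "transverse" means that at $[\nabla]$ the tangent spaces satisfy $T\Op_n\cap T\Mbold_n^\RBbb=0$. Since $\Op_n$ is a complex half-dimensional submanifold and $\Mbold_n^\RBbb$ is a totally real, half-real-dimensional submanifold near irreducible points, this is equivalent to $T\Op_n + T\Mbold_n^\RBbb = T\Mbold_n$.

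Write $\sigma$ for the antiholomorphic involution, so $T\Mbold_n^\RBbb=\{v: d\sigma(v)=v\}$. Since $d\sigma$ is antilinear, $T\Op_n\cap T\Mbold_n^\RBbb=0$ is equivalent to $T\Op_n\cap d\sigma(T\Op_n)=0$. Indeed, if $v$ lies in the intersection then so does $iv$, and then $v=d\sigma(v)$ together with $d\sigma(iv)=-iv$ forces $v=0$ whenever $v,iv$ both lie in $T\Op_n\cap T\Mbold^\RBbb$; conversely, $v+d\sigma(v)$ is real.

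So the plan is to show that a tangent vector $\dot\nabla$ to the oper locus cannot be tangent to the conjugate oper locus, unless it is zero.

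\emph{Step 1: identify the tangent space.} $T_{[\nabla]}\Mbold_n=H^1_{dR}(X,\slfrak(E)_\nabla)$. The oper structure gives a filtration $0\subset F_1\subset\cdots\subset F_n=E$ with $F_1\cong K^{(n-1)/2}$. By the description of $\Op_n$ as an affine space over $\bigoplus_k H^0(K^k)$, $T\Op_n$ consists of the classes of $(1,0)$-forms $\dot\nabla=\Phi$. Here $\Phi$ is a holomorphic $\slfrak(E)$-valued $(1,0)$-form that is strictly ``lower'' for the filtration, i.e. $\Phi(F_i)\subset F_{i-1}\otimes K$ up to gauge, equivalently nilpotent of the opposite type.

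\emph{Step 2: use reality.} Real holonomy gives a flat real structure, i.e. a parallel antilinear involution $\tau$ of $E$. Then $d\sigma[\Phi]=[\tau\Phi\tau]$, which is a $(0,1)$-form preserving the conjugate filtration $\tau(F_\bullet)$, an anti-holomorphic oper structure.

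\emph{Step 3: the key pairing argument, modelled on Faltings/Goldman for $n=2$.} Suppose $[\Phi]=[\tau\Phi\tau]$ in $H^1$. I will pair against the Goldman symplectic form, or better the hermitian-type form
\[
h(\alpha,\beta)=i\int_X\tr(\alpha\wedge\tau\beta\tau),
\]
and evaluate $h([\Phi],[\Phi])=i\int_X\tr(\Phi\wedge\tau\Phi\tau)$. If $[\Phi]$ is real, this equals $i\int\tr(\Phi\wedge\Phi)$ up to a coboundary correction. That quantity vanishes, since $\Phi\wedge\Phi$ is $(2,0)$ and therefore zero. On the other hand, $\tau\Phi\tau=\Phi^{*_H}$ for a hermitian-like form $H$ built from $\tau$; this is where reality becomes positivity. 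The form $i\tr(\Phi\wedge\Phi^{*})$ would then be definite, forcing $\Phi=0$.

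\emph{Main obstacle.} The form $\tr(\cdot\,\tau(\cdot)\tau)$ is only a real-symmetric, indefinite form of signature $(p,q)$ on $E$, not positive. So the positivity argument must instead exploit the filtration. I expect this to be the hard step. What I would try first is this: $\Phi$ lowers the oper filtration by one step, while $\tau\Phi\tau$ lowers the conjugate filtration. Choose a $C^\infty$ splitting in which $\nabla$ has the oper normal form, with $(0,1)$-part upper-triangular and $(1,0)$-part companion-type. Write the putative equality as $\Phi-\tau\Phi\tau=\nabla s$ and analyse the extreme graded pieces. The bottom-left corner component $\Phi_{n1}\in H^0(K^n)$ must be cancelled by $\partial s_{n1}+(\text{lower terms})$. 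I would attempt to argue inductively, using the transversality of $F_\bullet$ and $\tau F_\bullet$. This transversality holds since $F_1$ has negative degree and $\tau F_1\cong \overline{K^{(n-1)/2}}$, so generically $F_1\cap\tau F_{n-1}=0$. The induction should show that each $k$-differential component of $\Phi$ is $\partial$-exact plus $\dbar$-exact, hence zero by Hodge theory. The residual difficulty is controlling the nonlinear lower-order terms of $\nabla$ in this induction.
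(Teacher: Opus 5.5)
\textbf{There is a genuine gap: you never prove that your form is definite on the oper directions.}

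Your setup matches the paper's. Your form $h(\alpha,\beta)=i\int_X\tr(\alpha\wedge\tau\beta\tau)$ is the paper's $H_\nabla$, with $\tau\beta\tau$ playing the role of $g^{-1}\overline\beta g$. Your argument that $h$ vanishes on real tangent vectors ($\tr(\Phi\wedge\Phi)=0$ plus an exact correction) is exactly the paper's lemma that $H_\nabla$ vanishes on vectors tangent to the real locus.

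The proof then stalls at the decisive step, which you flag yourself: you never show that $h$ is definite on $T_{[\nabla]}\Op_n$. The inductive graded-piece argument you sketch is not carried out. It also rests on shaky claims:
\begin{enumerate}[(i)]
\item $F_1\cong K^{(n-1)/2}$ has positive degree, not negative.
\item A merely generic statement such as $F_1\cap\tau F_{n-1}=0$ would not give the pointwise control you need.
\end{enumerate}

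\textbf{The missing idea: choose a rank-one representative.} The indefiniteness of $\tau$ on $E$ stops mattering once you pick the right representative. Do not take an arbitrary filtration-lowering $\Phi$. Instead take the variation of the companion matrix obtained from $Q_k\mapsto Q_k+tq_k$, namely $a=\ebold_n\otimes q^t$ with $q^t=\sum_{j=1}^{n-1}q_{n-j+1}\ebold_j^\ast$.

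In the flat frame this is the rank-one form $b=U\otimes\xi$. Here $U=W^{-1}\ebold_n$ spans $F_1$, and $\xi=q^tW$ is a holomorphic covector-valued $(1,0)$-form. For a rank-one form, using $g\overline g=\epsilon I$, the integrand factors as
\[
i\tr\bigl(b\wedge g^{-1}\overline b g\bigr)=\epsilon\,\bigl|\xi(\overline{gU})\bigr|^2\,i\,dz\wedge d\bar z .
\]
This is pointwise semidefinite whatever the signature of the real structure. Hence $h(b,b)=0$ forces $\xi(\overline{gU})\equiv 0$ identically.

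\textbf{Completing the argument.} Apply $\dbar$ repeatedly. Use that $\xi$ is holomorphic, that $g$ intertwines $\nabla$ and $\overline\nabla$, and that $\nabla$ is trivial in the frame $\vbold$. This gives $\xi(\overline{gU^{(k)}})=0$ for $k=0,\dots,n-1$.

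The companion-matrix recursion $U_j'=-U_{j-1}+Q_{n-j+1}U_n$, with $U_j=W^{-1}\ebold_j$, shows that $U,U',\dots,U^{(n-1)}$ form a basis at every point. Since $g$ is invertible, this forces $\xi=0$, hence $q=0$. Combined with your vanishing step, this gives $T_{[\nabla]}\Op_n\cap T_{[\nabla]}\Mbold_n^\RBbb=0$. No induction on graded pieces and no Hodge theory is needed.
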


\begin{corollary} \label{cor:main}
    For any $n\geq 2$, 
    $\Op_n^\RBbb\subset\Mbold_n^{irr.}$ is a discrete set.
\end{corollary}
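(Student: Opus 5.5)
Since the statement to prove is the Corollary, the plan is to derive it from Theorem \ref{thm:main}, and then to indicate how I would prove that theorem.

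\emph{The Corollary from the Theorem.} Near an irreducible $[\nabla]$, $\Mbold_n^{irr.}$ is a complex manifold of complex dimension $N=2(n^2-1)(g-1)$, hence real dimension $2N$. Let $\sigma$ be the antiholomorphic involution. Its fixed locus $\Mbold_n^\RBbb$ is, near a smooth fixed point, a totally real submanifold of real dimension $N$. Indeed, linearizing $\sigma$ in holomorphic coordinates, its differential is an antilinear involution of $T_{[\nabla]}\Mbold_n$, and the fixed space of such an involution has real dimension $N$. The oper locus $\Op_n$ has complex dimension $N/2$, hence real dimension $N$. Transversality means
\[
T\Op_n+T\Mbold_n^\RBbb=T\Mbold_n ,
\]
and the real dimensions $N+N=2N$ force the sum to be direct. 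So the intersection is transverse of dimension $0$, and $\Op_n^\RBbb$ is locally a single point. Every oper is irreducible, so $\Op_n^\RBbb\subset\Mbold_n^{irr.}$ is discrete.

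\emph{Reduction of the Theorem to a linear statement.} The sum of the tangent spaces is direct, hence all of $T\Mbold_n$ by the dimension count, exactly when
\[
T\Op_n\cap T\Mbold_n^\RBbb=0 .
\]
Since $T\Mbold_n^\RBbb$ is the fixed space of $d\sigma$, it suffices to show there is no nonzero $\dot\nabla\in T\Op_n$ with $d\sigma(\dot\nabla)=\dot\nabla$. The plan for this step:
\begin{enumerate}[(i)]
\item Identify $T_{[\nabla]}\Mbold_n=H^1_{dR}(X,\mathrm{ad}\,\nabla)$. Using the oper structure, write the bundle as the jet bundle $E=J^{n-1}(K^{-(n-1)/2})$ with its filtration. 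Then $T\Op_n$ is spanned by the classes $[\beta\, e]$, where $\beta\in\bigoplus_{k=2}^n H^0(K^k)$ enters as the lowest-corner entries. These classes are of type $(1,0)$ in the Hodge-like decomposition induced by the filtration.
\item Take a real-holonomy oper. Its complex conjugate local system is isomorphic to itself, so there is a flat antilinear isomorphism $C\colon E\to E$ with $C\nabla C^{-1}=\nabla$. Then $d\sigma(\dot\nabla)=C\dot\nabla C^{-1}$. Note that $C$ carries the holomorphic oper filtration to an \emph{antiholomorphic} flag.
\item Use the Goldman (Atiyah--Bott) symplectic form $\omega(a,b)=\int_X\tr(a\wedge b)$. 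Here $\Op_n$ is Lagrangian, and $\sigma$ is antisymplectic up to conjugation. Suppose $\dot\nabla=[\beta]$ is $\sigma$-fixed. Form the pairing
\[
\omega\bigl([\beta],\overline{[\beta]}\bigr)=c\int_X \tr\bigl(\beta\wedge C\beta C^{-1}\bigr),
\]
and show that it is a definite hermitian quantity, of the form $i\int_X\langle\beta,\beta\rangle_h$, for a suitable hermitian form $h$ built from $\nabla$ and $C$. Definiteness would force $\beta=0$.
\end{enumerate}

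\emph{Expected main obstacle.} The hard step is the definiteness in (iii). In rank $2$, Faltings' argument uses the developing map and the fact that a real projective structure has no fixed signature. In general, the real form of the holonomy may be any $\SL_n(\RBbb)$, whose invariant indefinite structures are not positive. My first attempt is different: show that any $\dot\nabla\in T\Op_n\cap T\Mbold_n^\RBbb$ yields, via (ii), a holomorphic differential $\beta$ such that $C\beta C^{-1}$ is simultaneously exact in $\mathrm{ad}\,\nabla$-cohomology. Then I would deduce that $\beta$ is exact, using the filtration degree: $C\beta C^{-1}$ sits in the opposite antiholomorphic filtration, and the two filtrations are transverse generically, that is, away from the Wronskian zeros of the real structure. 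Finally, exactness of a holomorphic-lower-corner class forces $\beta=0$, by the injectivity of $\bigoplus_k H^0(K^k)\to H^1$, which is already part of $\Op_n$ being a submanifold. Controlling the non-generic points, where the filtrations fail to be transverse, is where I expect the real difficulty.
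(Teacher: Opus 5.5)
Your deduction of the corollary from Theorem~\ref{thm:main} is correct and follows the paper's route. The paper only says discreteness ``follows immediately''; your dimension count supplies the justification: $\Op_n$ and $\Mbold_n^{\RBbb}$ both have real dimension $N$ inside a manifold of real dimension $2N$, so transversality amounts to $T\Op_n\cap T\Mbold_n^{\RBbb}=0$, and the intersection point is isolated. Your pairing in (iii) is also exactly the paper's hermitian form $H_\nabla$ of \eqref{eqn:pairing}, with $C\beta C^{-1}$ playing the role of $g^{-1}\overline{b}g$. So with Theorem~\ref{thm:main} taken as given, you are done.

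However, you also undertake to prove the theorem, and there the proposal has a genuine gap. Definiteness of $H_\nabla$ on $T_{[\nabla]}\Op_n$ (Proposition~\ref{prop:definite}) is the substance of the paper, and in your proposal it appears only as a goal. Your fallback does not supply it, for three reasons:
\begin{enumerate}[(a)]
\item $\sigma$-invariance only says that $C\beta C^{-1}-\beta$ is $d_\nabla$-exact, not that $C\beta C^{-1}$ is exact.
\item Nothing in the sketch turns generic pointwise transversality of the oper flag and its conjugate into exactness of $\beta$, which is a global statement.
\item The degenerate points remain, as you say yourself, uncontrolled.
\end{enumerate}

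The missing idea is that oper directions have \emph{rank-one} representatives. This, not any positive structure preserved by the real form, is what produces a sign. A variation $Q_k\mapsto Q_k+tq_k$ gives $\dot A=\ebold_n\otimes q^t$. So in the flat frame $\vbold$ one has $b=U\otimes\lambda$, with $U=W^{-1}\ebold_n$ and $\lambda=q^tW$ holomorphic. Using $g\overline g=\epsilon I$ (Proposition~\ref{prop:real}), the integrand factors as $i\tr(b\wedge g^{-1}\overline{b}g)=\epsilon\,|\lambda\,\overline{gU}|^2\,i\,dz\wedge d\bar z$. You are right that $M\mapsto\tr(Mg^{-1}\overline{M}g)$ is indefinite on $\slfrak_n$, but it has constant sign $\epsilon$ on rank-one $M$, so your worry about indefinite invariant forms does not arise.

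Nondegeneracy is then not a pointwise transversality statement at all. If $H_\nabla(b,b)=0$, then $\lambda\,\overline{gU}\equiv 0$. Since $\lambda$ is holomorphic and $g$ intertwines $\nabla$ with $\overline\nabla$ (so $g$ is constant in the flat frame), applying $\dbar$ repeatedly gives $\lambda\,\overline{gU^{(k)}}=0$ for $k=0,\dots,n-1$. By Lemma~\ref{lem:independent}, $U,U',\dots,U^{(n-1)}$ is a basis at \emph{every} point; this comes from $d(W^{-1})=W^{-1}A$ and the triangular structure of the companion matrix. Hence $\lambda=0$, and so $q=0$.

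Combine this with the easy vanishing of $H_\nabla$ on $\sigma$-fixed directions (Lemma~\ref{lem:real}: the integrand becomes $\tr(b\wedge b)=0$ plus an exact term). This proves the theorem, with no degenerate points to handle.
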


\section{Preliminaries}

\subsection{Flat connections} \label{sec:flat}

Let $E=X\times\CBbb^n$ be a complex rank $n$ vector bundle with a fixed
trivialization. 
The bundle $\ad(E)$ of smooth  traceless endomorphisms inherits a trivialization as
well.
With respect to this trivialization, any connection
$\nabla$ may be expressed $\nabla=d+B$, where $B\in \Omega^1(X,\ad(E))$.
In a local conformal coordinate $z$ on $X$, $B=B^{1,0}(z)dz+B^{0,1}(z)d\bar z$.  Thus we have a well-defined
real structure given by:
$$
\overline\nabla:= d+\overline{B}\quad ,\quad 
\overline B=\overline{B^{0,1}}dz+\overline{B^{1,0}}d\bar z\ .
$$

Let $\Acal(E)$ denote the infinite dimensional space of connections on $E$.
The trivialization gives the identification $\Acal(E)=\Omega^1(X,\ad(E))$.
Let $\Gcal(E)$ denote the space of smooth gauge transformations. This is an
infinite dimensional Frechet  Lie group  identified under the
trivialization with the space of smooth maps $X\to \SL_n(\CBbb)$.  Denote
the action of $\Gcal(E)$ on $\Acal(E)$ by: $(g,\nabla)\mapsto g(\nabla)$.
Explicitly, 
$$
g(\nabla)=g\circ d_\nabla\circ g^{-1}\quad ,\quad B\longmapsto
gBg^{-1}-dgg^{-1}\ .
$$
Let $\Acal^{cr}(E)\subset\Acal(E)$ denote the subspace of \emph{completely
reducible} connections. This is preserved under the action of $\Gcal(E)$
and has closed orbits.
Then $\Mbold_n:=\Acal^{cr}(E)/\Gcal(E)$ can be given the structure of a complex
variety of dimension $2(n^2-1)(g-1)$. 
The open subset  $\Mbold_n^{irr.}\subset\Mbold_n$ of equivalence classes of
irreducible connections lies in the smooth locus. 
For $[\nabla]\in \Mbold_n^{irr.}$, the tangent space $T_{[\nabla]}\Mbold_n$
is identified with $H^1_{d_\nabla}(X,\ad(E))$.

\begin{definition}
    The \emph{real locus} is defined by
$
    \Mbold_n^{\RBbb}=\left\{ [\nabla]\in \Mbold_n \mid
    [\overline\nabla]=[\nabla]\right\}
$.
\end{definition}
\noindent
Explicitly,  $[\nabla]\in \Mbold_n^{\RBbb}$
if for one (and hence any) representative connection $\nabla\in [\nabla]$
there is $g\in \Gcal(E)$ such that $\overline\nabla=g(\nabla)$. 
The map $[\nabla]\mapsto [\overline\nabla]$ defines an antiholomorphic
involution of $\Mbold_n$ with $\Mbold_n^\RBbb$ as its fixed point set.

\subsection{Opers} \label{sec:opers}

Fix $n=2q-1$. Let $D$ be an $n$-th order differential operator realized in
$K_X^{1-q}$, where we fix a square root of $K_X$ if $n$ is even\footnote{As
mentioned above, not every component of $\Op_n$ comes from differential
operators in $K_X^{1-q}$; it will be clear, however, that the arguments in this paper
apply to the other components as well}.  The corresponding local system $\Vbold$ 
is given by the kernel of $D$, so that we
have an exact sequence of $\underline\CBbb$-sheaves:
$$
0\lra \Vbold\xrightarrow[\hspace{.75cm}]{\varphi}K_X^{1-q}
\xrightarrow[\hspace{.75cm}]{D} K_X^q\lra 0\ ,
$$
where $\underline\CBbb$ is the sheaf of locally constant functions.
Let $\Vcal=\Vbold\otimes_{\underline\CBbb}\Ocal_X$. Then there is an
isomorphism 
$$
\Phi : \Vcal\isorightarrow \Jcal^n(K_X^{1-q}) : f\otimes \vbold\longmapsto f
j^n(\varphi(\vbold)) \ ,
$$
where $\Jcal^n(K_X^{1-q})$ is the $n$-jet bundle of $K_X^{1-q}$, and
$j^n(s)$ is the $n$-jet of a local section $s$. 
In particular, the underlying bundle of an oper is fixed (in a given
component), and only the
differential operator $D$ varies.

We will need a more explicit form of this. Let
$\vbold=\{\vbold_1,\ldots,\vbold_n\}$ be a local frame of $\Vbold$. 
This gives a local holomorphic frame for $\Vcal$, which we denote also by
$\vbold$.
A
choice of local coordinate $z$ on $X$  gives local trivialization
$\ebold$ of
$\Jcal^n(K_X^{1-q})$, where for a section $s=s(z)dz^{1-q}$, $j^n(s(z))=(s(z),s'(z), \ldots,
s^{(n-1)}(z))$. Set $y_i=\varphi(\vbold_i)$.
Then the $y_i$ are solutions to the equation
$$
Dy=y^{(n)}+Q_2y^{(n-2)}+\cdots+Q_ny=0
$$
where $Q_k$ are local holomorphic functions.
Here, $12Q_2/n(n^2-1)$ defines a projective connection, and by the Wilczynski
transform, the $Q_k$'s for $k\geq 3$ are in 1-1 correspondence with
holomorphic $k$-differentials on $X$ \cite{Wil:62}. 

Let $\nabla$ denote the holomorphic connection on $\Vcal$ determined by
$D$.
By definition,   $\nabla = d$ in the frame
 $\vbold$.
In the frame $\ebold$ determined by the choice of
coordinate $z$, $\nabla$ 
is represented  by the companion matrix $\nabla=d+A(z)dz$, where:
\begin{equation} \label{eqn:A}
    A(z)=\left(\begin{matrix}0 &-1 &0 &\ldots &0 \\
    0 & 0 & -1 & \ldots  &0 \\
    \vdots &\ddots &\ddots &\ddots& \vdots \\
    0 & \ldots & \ldots & 0 &-1 \\
Q_n & Q_{n-1} & \ldots & Q_2 &0 \end{matrix}
\right)
\end{equation}
The gauge transformation relating 
these two frames is the Wronskian matrix:
\begin{equation} \label{eqn:wronskian}
W(z)=\left(\begin{matrix} y_1(z)&\cdots& y_n(z)\\
    y_1'(z) &\cdots & y_n'(z)\\
    \vdots & \cdots &\vdots \\
y_1^{(n-1)}(z) &\cdots &y_n^{(n-1)}(z)  \end{matrix}\right)
\end{equation}

Lift this to the universal cover $\widetilde X$, where $\Gamma=\pi_1(M)$ acts by
deck transformations. We may then assume the frames are global.
By definition of the
holonomy $\rho$ of $\nabla$:
\begin{equation} \label{eqn:holonomy}
    \vbold(\gamma z)=\rho(\gamma)\vbold(z)\ .
\end{equation}
The representation $\rho:\Gamma\to \SL_n(\CBbb)$ is well-defined up to conjugation, depending upon
the initial choice of frame at a base point.


Let $\Op_n\subset \Mbold_n$ denote the set of gauge equivalence classes of
flat connections coming from opers. It is an important fact  that
every oper is irreducible, and hence $\Op_n\subset \Mbold_n^{irr.}$. 

\subsection{The reality condition}

\begin{proposition}  \label{prop:real}
    Let $\nabla$ be irreducible and $\overline\nabla=g(\nabla)$.
    Then $g\overline g=\pm I$.
\end{proposition}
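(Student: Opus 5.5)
The plan is to apply the real structure twice and then use irreducibility, via Schur's lemma, to pin down $g\overline g$. The operation $\nabla\mapsto\overline\nabla$ is an involution on $\Acal(E)$: conjugating $B$ twice returns $B$. We need to know how it interacts with the gauge action. For $g\in\Gcal(E)$, take $g(\nabla)=d+gBg^{-1}-dg\,g^{-1}$. Since the real structure on forms is entrywise complex conjugation combined with swapping $dz$ and $d\bar z$, it commutes with products and with $d$. Hence
$$\overline{g(\nabla)}=d+\overline g\,\overline B\,\overline g^{-1}-d\overline g\,\overline g^{-1}=\overline g(\overline\nabla)\ ,$$
where $\overline g$ is the pointwise complex conjugate of $g$, which again lies in $\Gcal(E)$.

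Next, conjugate the relation $\overline\nabla=g(\nabla)$. This gives
$$\nabla=\overline{\overline\nabla}=\overline{g(\nabla)}=\overline g(\overline\nabla)=\overline g\big(g(\nabla)\big)=(\overline g g)(\nabla)\ .$$
So $h:=\overline g g$ lies in the stabilizer of $\nabla$ in $\Gcal(E)$. Equivalently, $h$ is a parallel section of $\mathrm{End}(E)$ with respect to $\nabla$, since $h\circ d_\nabla\circ h^{-1}=d_\nabla$. Because $\nabla$ is irreducible, Schur's lemma says that any parallel endomorphism is a scalar. Concretely, if $h-\lambda$ has nonzero kernel at one point for an eigenvalue $\lambda$, then that kernel is a parallel subbundle, so irreducibility forces $h=\lambda I$. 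As $\det h=1$, we get $\lambda^n=1$.

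It remains to pass from $\overline g g$ to $g\overline g$ and to show $\lambda=\pm1$. From $\overline g g=\lambda I$ we get $g=\lambda\overline g^{-1}$, so
$$g\overline g=\lambda\overline g^{-1}\overline g=\lambda I\ .$$
Taking complex conjugates of $\overline g g=\lambda I$ gives $g\overline g=\overline\lambda I$. Therefore $\lambda=\overline\lambda$ is real, and together with $\lambda^n=1$ this gives $\lambda=\pm1$.

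I do not expect a serious obstacle. The only step needing care is the compatibility $\overline{g(\nabla)}=\overline g(\overline\nabla)$, specifically checking that the $(1,0)/(0,1)$ swap commutes with conjugating $dg\,g^{-1}$. This works because $\overline{\partial g}=\dbar\,\overline g$ when expressed with the swapped types, so the formula holds as written.
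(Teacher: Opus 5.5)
Your proof is correct and takes the same route as the paper. You conjugate $\overline\nabla=g(\nabla)$ a second time to see that $k=\overline g g$ is $\nabla$-parallel, then use irreducibility to make $k$ a scalar, and finally use $\det k=1$ together with $\lambda=\overline\lambda$ to conclude $g\overline g=\pm I$. The only differences are cosmetic: you package the computation as the equivariance $\overline{g(\nabla)}=\overline g(\overline\nabla)$ rather than splitting into $(1,0)$ and $(0,1)$ parts, and you spell out the reality step that the paper leaves implicit.
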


\begin{proof}
    In the trivial frame (see \S \ref{sec:flat}), write $\nabla=d+B$. Then we have
    $$
    \overline B=g B g^{-1} - dg g^{-1} \ .
    $$
    Breaking into type, we have
$$
    \overline {B^{0,1}}=g B^{1,0}g^{-1}-\partial g g^{-1}\quad ,\quad
    \overline{B^{1,0}}=g B^{0,1}g^{-1}-\dbar g g^{-1} \ ,
$$
and hence,
    \begin{align*}
        B^{0,1}&=-\dbar {\bar g}{\bar g}^{-1}+\bar g\overline{B^{1,0}}{\bar
    g}^{-1} 
        =-\dbar {\bar g}{\bar g}^{-1}+\bar g
        \left(g B^{0,1}g^{-1}-\dbar g g^{-1}\right){\bar g}^{-1} \\
        &=-\dbar k k^{-1}+k B^{0,1}k^{-1}\ ,
    \end{align*}
    where $k=\bar g g$.
    A similiar calculation shows that $B^{1,0}=-\partial k k^{-1}+k
    B^{1,0}k^{-1}$. Hence, $d_\nabla(k)=0$. Since $\nabla$ is irreducible,
    $k$ must be constant and in the center of $\SL_n(\CBbb)$. But then $k$ must
   also be real, and the conclusion follows. 
\end{proof}

The following is clear from the definition of holonomy.
\begin{lemma} \label{lem:real-holonomy}
    Let $\nabla$ satisfy $\overline\nabla=g(\nabla)$, and let $\rho$ denote
    the holonomy of $\nabla$.
    Then there is $g_0\in \SL_n(\CBbb)$ such that for all
    $\gamma\in\Gamma$,
    $\overline{\rho(\gamma)}=g_0\rho(\gamma)g_0^{-1}$.
\end{lemma}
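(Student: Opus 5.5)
We need to prove Lemma \ref{lem:real-holonomy}: if $\overline\nabla=g(\nabla)$, there is a constant $g_0\in\SL_n(\CBbb)$ with $\overline{\rho(\gamma)}=g_0\rho(\gamma)g_0^{-1}$ for all $\gamma\in\Gamma$. The plan is to compare flat frames of $\nabla$ and $\overline\nabla$ on the universal cover $\widetilde X$. In the fixed trivialization of $E$, we express the holonomy through a fundamental solution, transform it by complex conjugation and by $g$, and read off $g_0$ as a constant of integration.

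First, lift everything to $\widetilde X$ and write $\nabla=d+B$ in the trivialization. Let $F:\widetilde X\to\SL_n(\CBbb)$ be a fundamental matrix of flat sections, normalized at a base point. Its columns are flat, so $dF+BF=0$, and equivariance defines $\rho$ by
$$F(\gamma z)=F(z)\rho(\gamma)^{\mp1}.$$
The sign or inverse convention is immaterial, provided it is used consistently; it is the content of \eqref{eqn:holonomy}.

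Next, conjugate. From $dF+BF=0$ we get $d\overline F+\overline B\,\overline F=0$, where $\overline B$ is exactly the connection form of $\overline\nabla$ defined in \S\ref{sec:flat}. This is because complex conjugation of the $1$-form $B$ swaps types, which matches the definition of the real structure. Hence $\overline F$ is a fundamental matrix for $\overline\nabla$, and it satisfies $\overline F(\gamma z)=\overline F(z)\,\overline{\rho(\gamma)}^{\mp1}$.

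On the other hand, the gauge formula $g(\nabla)=g\circ d_\nabla\circ g^{-1}$ shows that $gF$ is also a fundamental matrix for $\overline\nabla=g(\nabla)$. Since $g$ is a function on $X$, $g(\gamma z)=g(z)$ on $\widetilde X$, and so
$$gF(\gamma z)=g(z)F(z)\rho(\gamma)^{\mp1}.$$

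Two fundamental matrices of the same flat connection on the simply connected $\widetilde X$ differ by right multiplication by a constant. Concretely, $d\big((gF)^{-1}\overline F\big)=0$, so $\overline F=gF\,g_0^{-1}$ for some constant $g_0$. Since $g$, $F$ and $\overline F$ all have determinant one, $g_0\in\SL_n(\CBbb)$.

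Finally, evaluate both sides of $\overline F=gF\,g_0^{-1}$ at $\gamma z$. The left side gives $\overline F(z)\,\overline{\rho(\gamma)}^{\mp1}$. The right side gives
$$g(z)F(z)\rho(\gamma)^{\mp1}g_0^{-1}=\overline F(z)\,g_0\rho(\gamma)^{\mp1}g_0^{-1}.$$
Since $\overline F(z)$ is invertible, cancelling it yields $\overline{\rho(\gamma)}=g_0\rho(\gamma)g_0^{-1}$.

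The only real point of care is checking that conjugating the flatness equation produces precisely the connection $\overline\nabla$ with the paper's type-swapping convention. With $d\overline F=\overline{dF}$ and $\overline{B}$ as defined, this is immediate. The rest is routine bookkeeping of the holonomy convention.
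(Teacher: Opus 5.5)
Your proof is correct. It is exactly the natural unpacking of what the paper dismisses as ``clear from the definition of holonomy'' (the paper gives no written argument): $gF$ and $\overline F$ are both fundamental matrices for $\overline\nabla$ on $\widetilde X$, so they differ by a constant $g_0$, which then intertwines the monodromies. The only implicit step is $\det F\equiv 1$, which follows from $\tr B=0$; in any case, $g_0$ could be rescaled into $\SL_n(\CBbb)$ without changing the conjugation.
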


\section{Proof of the main theorem}

\subsection{A hermitian form}
Let $\nabla$ be a real connection: $\overline\nabla=g(\nabla)$.  
We define a sesquilinear form on $\Omega^1(X,\ad(E))$ by the pairing:
\begin{equation} \label{eqn:pairing}
    H_\nabla(b_1,b_2)= i\int_X \tr(b_1\wedge g^{-1}\overline{b_2}g)
\end{equation}

\begin{lemma} \label{lem:form}
$H_\nabla$  defines a hermitian form  on 
    $T_{[\nabla]}\Mbold_n$ for any $[\nabla]\in \Mbold_n^\RBbb$.
\end{lemma}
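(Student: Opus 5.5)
To prove Lemma \ref{lem:form}, I will show three things about $H_\nabla$ as defined in \eqref{eqn:pairing}: it is hermitian on all of $\Omega^1(X,\ad(E))$, it descends to the cohomology $H^1_{d_\nabla}(X,\ad(E))\cong T_{[\nabla]}\Mbold_n$, and it does not depend on the choices made, namely the representative $\nabla$ and the gauge transformation $g$.

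\emph{Hermitian symmetry.} Sesquilinearity is immediate: the form is linear in $b_1$ and conjugate linear in $b_2$. For symmetry, I compute
$$\overline{H_\nabla(b_2,b_1)}=-i\int_X\tr(\overline{b_2}\wedge \bar g^{-1} b_1\bar g).$$
By Proposition \ref{prop:real}, $\bar g=\pm g^{-1}$, so $\bar g^{-1}b_1\bar g=g b_1g^{-1}$. Cyclicity of the trace then gives $\tr(\overline{b_2}\wedge gb_1g^{-1})=\tr(g^{-1}\overline{b_2}g\wedge b_1)$. Swapping two $\ad(E)$-valued $1$-forms inside a trace introduces a sign, $\tr(\alpha\wedge\beta)=-\tr(\beta\wedge\alpha)$. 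This sign cancels the $-i$, and the result is $i\int\tr(b_1\wedge g^{-1}\overline{b_2}g)=H_\nabla(b_1,b_2)$.

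\emph{Descent to cohomology.} The key identity is that conjugation intertwines the covariant derivatives. Since $\overline\nabla=g(\nabla)$, we have $d_{\overline\nabla}(\overline{c})=\overline{d_\nabla c}$, and $d_{g(\nabla)}(gcg^{-1})=g(d_\nabla c)g^{-1}$. Together these give
$$g^{-1}\,\overline{d_\nabla c}\,g=d_\nabla(g^{-1}\bar c g).$$
Now take $b_1,b_2$ to be $d_\nabla$-closed. Changing $b_2$ to $b_2+d_\nabla c$ changes the form by $i\int\tr(b_1\wedge d_\nabla(g^{-1}\bar cg))$. By Stokes' theorem and closedness of $b_1$, this equals $\pm i\int d\,\tr(b_1\, g^{-1}\bar c g)=0$. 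Exactness in the first slot is handled the same way, or follows from hermitian symmetry. This proves well-definedness on $H^1_{d_\nabla}$.

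\emph{Independence of choices.} First, $g$ is unique up to sign. If $\overline\nabla=g(\nabla)=g'(\nabla)$, then $g^{-1}g'$ is $\nabla$-parallel, so by irreducibility it is central, $g'=\zeta g$. Central factors cancel in $g^{-1}\overline{b_2}g$, so the form is unchanged. Second, for a gauge-equivalent representative $h(\nabla)$, the tangent vectors transform as $b\mapsto hbh^{-1}$, and $\overline{h(\nabla)}=\bar h g h^{-1}(h(\nabla))$. Substituting $g\mapsto \bar h g h^{-1}$ into \eqref{eqn:pairing}, the factors of $h$ and $\bar h$ cancel under the trace. Hence $H$ is a well-defined hermitian form on $T_{[\nabla]}\Mbold_n$.

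I expect the main obstacle to be the descent step, specifically verifying the intertwining identity $g^{-1}\overline{d_\nabla c}g=d_\nabla(g^{-1}\bar cg)$. This comes from conjugating $\overline{B}=gBg^{-1}-dgg^{-1}$, and the $dg$ terms must cancel exactly; this is the part that needs checking carefully.
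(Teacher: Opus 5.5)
Your proposal is correct and follows the paper's proof essentially step for step: hermitian symmetry from $g\overline g=\pm I$ (Proposition \ref{prop:real}) together with cyclicity of the trace and the wedge sign; descent to $H^1_{d_\nabla}$ via $\Ad_{g^{-1}}\circ d_{g(\nabla)}=d_\nabla\circ\Ad_{g^{-1}}$ and Stokes; and gauge invariance via $\overline{h(\nabla)}=(\overline h g h^{-1})(h(\nabla))$. Your additional check that $g$ is determined only up to a central factor, which cancels in the form, fills in a point the paper leaves implicit; like the paper, this step and Proposition \ref{prop:real} tacitly assume $\nabla$ is irreducible.
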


\begin{proof}
   We first verify that $H_\nabla$ is hermitian. Indeed,
    using Proposition \ref{prop:real}, we have
    \begin{align*}
        \overline{H_\nabla(b_1,b_2)}&=-i\int_X \tr(\overline{b_1}\wedge
        \overline g^{-1}b_2\overline g) 
        =-i\int_X \tr(\overline{b_1}\wedge  gb_2 g^{-1}) 
        =-i\int_X \tr(g^{-1}\overline{b_1}g\wedge b_2 ) \\
        &=i\int_X \tr(b_2\wedge g^{-1}\overline{b_1}g) 
        =H_\nabla(b_2,b_1)\ .
\end{align*}
    Now suppose $b_1,b_2\in \ker d_\nabla$.
    Suppose that we change $b_2\to b_2+d_\nabla\eta$,
    for some endomorphism $\eta$.  Then 
    again using $\overline\nabla=g(\nabla)$, we have
    $\overline{b_2}\lra \overline{b_2}+d_{g(\nabla)}\overline\eta$.
    Now $d_{g(\nabla)}=\Ad_g\circ d_\nabla \circ \Ad_{g^{-1}}$ and
    $d_\nabla b_1=0$,  and so the change in the integrand is
    $$
    \tr(b_1\wedge d_\nabla(\Ad_{g^{-1}}\overline\eta))
    =-d\tr(b_1 g^{-1}\overline\eta g) \ .
    $$
    A similar argument holds for a change $b_1\to b_1+d_\nabla\eta$.
    Hence, $H_{\nabla}$ is independent of the choice of representatives of
    $[b_1], [b_2]\in H^1_{d_\nabla}(X,\ad(E)))$.  
    Finally, if $\nabla_1=g_1(\nabla)$, then
    $\overline\nabla_1=(\overline{g_1}gg_1^{-1})(\nabla_1)$. 
    Under the isomorphism $H^1_{d_\nabla}(X,\ad(E))\isorightarrow
    H^1_{d_{\nabla_1}}(X,\ad(E))$ given by $[b]\mapsto [\Ad_{g_1}b]$,
    we can compute that the hermitian form is invariant. This completes the proof.
\end{proof}

For simplicity, we continue to write $H_\nabla(b_1,b_2)$ for the form on
$T_{[\nabla]}\Mbold_n$.

\begin{lemma} \label{lem:real}
    Suppose $\nabla$ is an irreducible real connection.
    Let $[b]\in H^1_{d_\nabla}(X,\ad(E))$ be tangent to the real locus. Then
    $H_\nabla(b,b)=0$. 
\end{lemma}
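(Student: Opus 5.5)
We differentiate a path in the real locus and compare $H_\nabla(b,b)$ with the Atiyah--Bott--Goldman symplectic pairing $\omega(b_1,b_2)=\int_X\tr(b_1\wedge b_2)$. Since $\nabla$ is irreducible, $[\nabla]$ is a smooth point. We may take a smooth path of real connections $\nabla_t=d+B_t$, with $\nabla_0=\nabla$, whose derivative at $t=0$ represents $[b]$; let $b=\dot B_0$, so that $d_\nabla b=0$. By reality there are $g_t\in\Gcal(E)$ with $\overline{\nabla_t}=g_t(\nabla_t)$ and $g_0=g$. The first step is to arrange that $g_t$ can be chosen differentiably in $t$. By irreducibility the stabilizer of $\nabla_t$ is the center, so $g_t$ is unique up to a finite central ambiguity. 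Working in a local slice for the gauge action (Coulomb gauge about $\nabla$), the equation $\overline{\nabla_t}=g_t(\nabla_t)$ can then be solved smoothly by the implicit function theorem. Alternatively, one may first pass to the holonomy description and use Lemma \ref{lem:real-holonomy}.

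Differentiating $\overline{B_t}=g_tB_tg_t^{-1}-dg_t\,g_t^{-1}$ at $t=0$ and writing $\dot g=\xi g$ with $\xi$ a traceless endomorphism gives
$$
\overline b = g b g^{-1} - d_{g(\nabla)}\xi .
$$
Conjugating by $g^{-1}$ and using $d_{g(\nabla)}=\Ad_g\circ d_\nabla\circ\Ad_{g^{-1}}$, this becomes
$$
g^{-1}\overline b g = b - d_\nabla(g^{-1}\xi g).
$$
So in cohomology $[g^{-1}\overline b g]=[b]$.

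By the representative-independence already established in the proof of Lemma \ref{lem:form}, we may replace $g^{-1}\overline b g$ by $b$ in the second slot. This gives
$$
H_\nabla(b,b)=i\int_X\tr(b\wedge g^{-1}\overline b g)=i\int_X\tr(b\wedge b).
$$
Finally, $\tr(b\wedge b)=0$ for any matrix-valued $1$-form: in components, $\tr(b\wedge b)=\sum_{j,k}b_{jk}\wedge b_{kj}$, which is antisymmetric under swapping $j,k$. Hence $H_\nabla(b,b)=0$.

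The main obstacle is the first step, the smooth dependence of $g_t$ on $t$, or equivalently the identification of $T_{[\nabla]}\Mbold_n^\RBbb$ with the $+1$-eigenspace of the differential of the involution. To avoid the path argument, I would instead show the following. The involution $[\nabla]\mapsto[\overline\nabla]$ has differential at a fixed point given by $\sigma[b]=[g^{-1}\overline b g]$, using the identification $H^1_{d_{\overline\nabla}}\cong H^1_{d_\nabla}$ via $\Ad_{g^{-1}}$. Tangent vectors to the fixed locus then satisfy $\sigma[b]=[b]$, and the computation above applies verbatim.
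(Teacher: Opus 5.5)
Your argument is correct and is essentially the paper's proof: you differentiate $\overline{\nabla_t}=g_t(\nabla_t)$ to get $g^{-1}\overline b g=b-d_\nabla(g^{-1}\xi g)$, and then $H_\nabla(b,b)$ vanishes because $\tr(b\wedge b)=0$ and $\tr(b\wedge d_\nabla\eta)$ is exact for $d_\nabla$-closed $b$. Your extra care in choosing $g_t$ smoothly in $t$ (or, alternatively, identifying the tangent space of the real locus with the $+1$-eigenspace of $[b]\mapsto[g^{-1}\overline b g]$) fills in a point the paper simply assumes.
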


\begin{proof}
Suppose we have a one parameter family $\nabla_t$ of connections satisfying 
    $\overline \nabla_t =g_t(\nabla_t)$, and set $b=\dt\nabla$, $h=\dt g
    g^{-1}$ at $t=0$.
    Then a calculation shows  that $\overline
    b=gbg^{-1}-d_{\overline\nabla}h$, or
    $$
    g^{-1}\overline b g=b-\Ad_{g^{-1}}d_{g(\nabla)}h=b-d_\nabla(g^{-1}hg) \
    .
    $$
Pairing this with $b$ in $H_\nabla$ gives zero, since 
$$
    \tr(b\wedge g^{-1}\overline b g)=\tr(b\wedge (b-d_\nabla(g^{-1}hg)))=
    d\tr(b\wedge g^{-1}hg) \ , 
$$
which then vanishes upon integration.
\end{proof}

On the other hand, $H_\nabla$ is non-degenerate in the oper directions.

\begin{proposition} \label{prop:definite}
    Suppose $\nabla$ is a real oper. Then $H_\nabla$ is definite on
    $T_{[\nabla]}\Op_n$ (with the sign depending on the sign of $g\overline
    g$ in Proposition  \ref{prop:real}.
\end{proposition}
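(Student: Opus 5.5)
The plan is to compute $H_\nabla(b,b)$ explicitly for $b$ tangent to $\Op_n$, working on the universal cover $\widetilde X$ in the two frames $\vbold$ (flat) and $\ebold$ (jet/companion), and to reduce it to a manifestly signed integral. Tangent vectors to $\Op_n$ at $[\nabla]$ come from varying the differentials $Q_k$. Write $\dot Q=(\dot Q_2,\dots,\dot Q_n)$ with $\dot Q_k$ representing holomorphic $k$-differentials; modulo the projective connection normalization, $\dot Q_2$ is a quadratic differential. Then $b=\dot A\,dz$, where $\dot A$ is the matrix with last row $(\dot Q_n,\dots,\dot Q_2,0)$ and zeros elsewhere. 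This is a holomorphic $(1,0)$-form, so $\overline b$ is of type $(0,1)$. Since $b$ is of type $(1,0)$ and $g^{-1}\overline b g$ is of type $(0,1)$, only $b\wedge g^{-1}\overline b g$ contributes, and the integrand is a genuine $(1,1)$-form.

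Next I would make $g$ explicit using the oper structure. Conjugate into the flat frame via the Wronskian \eqref{eqn:wronskian}: there $\nabla=d$, and $b$ becomes $\beta=W^{-1}\dot A W\,dz$. By Lemma \ref{lem:real-holonomy}, the real structure is implemented by the constant matrix $g_0$, and $g$ is $W$-conjugate to $g_0$ with $g_0\overline{g_0}=\pm I$ by Proposition \ref{prop:real}. Because $\dot A$ has rank one, supported on the last row, $W^{-1}\dot AW = W^{-1}e_n\,(\dot Q\cdot W)$. Writing $u=\sum_k \dot Q_k\,\partial^{n-k}y$ (a row vector in $y=(y_1,\dots,y_n)$) and $w=W^{-1}e_n$ (a column whose entries are, up to sign, the normalized cofactors of the last row of $W$), I get $\tr(\beta\wedge g_0^{-1}\overline\beta g_0)=(u\,g_0^{-1}\overline w)(\overline u\,g_0 w)\,dz\wedge d\bar z$. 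Equivalently, using $g_0$ to define the hermitian form $\langle v,v'\rangle = v^T g_0^{-1}\overline{v'}$ on $\CBbb^n$, which is hermitian up to the sign of $g_0\overline{g_0}$, the integrand reduces to $\pm |\langle u, w^T\rangle|^2\, dz\wedge d\bar z$ or a similar Gram-type expression. Then $i\,dz\wedge d\bar z=2\,dx\wedge dy$ gives a signed integrand.

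The main obstacle is the previous step: showing that the pointwise integrand is actually a squared modulus with a fixed sign, rather than an indefinite expression. I would first exploit the reality relation $\overline{y}=y\,g_0^{\pm}$ up to a $\Gamma$-invariant scalar multiplier. This comes from $\overline\nabla=g(\nabla)$ applied to the solution map $\varphi$, and should identify $\overline{W}$ with $W$ times $g_0$ and the Jacobian of the frame change. The goal is to turn $\overline w$ into an expression in $w$, so that the integrand becomes $|s|^2$ for the scalar $s=u\cdot w$. The pairing of $\sum_k\dot Q_k y^{(n-k)}$ against the dual solutions $w$ should be computed as the pairing of the variation of $D$ with the adjoint operator, and $s$ then becomes a function built from $\dot Q_k$ and the oper's hermitian metric, the pulled-back invariant form $\overline{y}g_0 y^T$. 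If this works, $H_\nabla(b,b)=\pm\int_X |s|^2 dA$, which is definite with sign determined by $g\overline g=\pm I$. It remains to check that $s\equiv0$ forces $\dot Q=0$, since the $\dot Q_k$ enter $s$ through independent powers of the metric. It also remains to check that changing $b$ by $d_\nabla\eta$ does not matter, which follows from Lemma \ref{lem:form}.
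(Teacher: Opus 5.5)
Your setup matches the paper's: the rank-one variation $\dot A$, the passage to the flat frame via the Wronskian, $u=\dot QW$ and $w=W^{-1}\ebold_n$. Your formula $\tr(\beta\wedge g_0^{-1}\overline\beta g_0)=(u g_0^{-1}\overline w)(\overline u g_0 w)\,dz\wedge d\bar z$ already settles the sign. Since $g_0\overline{g_0}=\epsilon I$ means $g_0^{-1}=\epsilon\overline{g_0}$, the first factor is $\epsilon\,u\,\overline{g_0w}$ and the second is its complex conjugate. So the integrand is $\epsilon\,|u\,\overline{g_0w}|^2\,dz\wedge d\bar z$, which is exactly \eqref{eqn:square}. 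What you call the main obstacle is therefore not one, and the route you propose for it fails:
\begin{enumerate}
\item There is no pointwise relation $\overline y=\lambda\,y\,g_0^{\pm1}$. Reality constrains only the holonomy (Lemma \ref{lem:real-holonomy}). For $n=2$, $g_0=I$, such a relation would make the developing map $y_1/y_2$ real-valued, hence constant.
\item Your target scalar vanishes identically. With $\dot Q=(\dot Q_n,\ldots,\dot Q_2,0)$ the last row of $\dot A$, you get $s=uw=\dot Q\,WW^{-1}\ebold_n=\dot Q\,\ebold_n=0$; equivalently $\tr\dot A=0$. That plan would therefore produce $H_\nabla(b,b)=0$, the opposite of what you want.
\item The form $v^Tg_0^{-1}\overline{v'}$ is not hermitian in general. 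The condition $g_0\overline{g_0}=\pm I$ gives a real or quaternionic structure, not hermitian symmetry.
\end{enumerate}

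The more serious gap is non-degeneracy, which is the actual content of the proposition. Saying that ``the $\dot Q_k$ enter $s$ through independent powers of the metric'' is not an argument. The missing idea is a $\dbar$-differentiation trick.
\begin{enumerate}
\item If $H_\nabla(b,b)=0$, the continuous semidefinite integrand vanishes, so $f:=u\,\overline{g_0w}\equiv0$ on $\widetilde X$.
\item Because $u=\dot QW$ and $w$ are holomorphic and $g_0$ is constant in the flat frame, $\partial_{\bar z}^kf=u\,\overline{g_0w^{(k)}}$. Hence $u\,\overline{g_0w^{(k)}}=0$ for $k=0,\ldots,n-1$.
\item You then need Lemma \ref{lem:independent}: $w,w',\ldots,w^{(n-1)}$ form a basis at every point. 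This follows from $(W^{-1})'=W^{-1}A$, which gives $U_j'=-U_{j-1}+Q_{n-j+1}U_n$ for $U_j=W^{-1}\ebold_j$. So $w^{(k)}=(-1)^kU_{n-k}$ plus a combination of $U_{n-k+1},\ldots,U_n$, a triangular change of basis.
\item Since $g_0$ is invertible, the vectors $\overline{g_0w^{(k)}}$ also form a basis. Hence $u=\dot QW=0$, and $\dot Q=0$ because $W$ is invertible.
\end{enumerate}
Without this step, your argument yields at most semidefiniteness.
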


\begin{proof}
    Let $[b]\in T_{[\nabla]}\Op_n$. We choose a particular representative.
    In the frame $\ebold$, a variation of opers $\nabla=d+A$ is given by a family
    $Q_i+tq_i$, where $q_i$ are holomorphic $i$-differentials (locally). 
    Let $a=\dt{A}$. We may write this as: $a=\ebold_n\otimes q^t$, where 
$$
    q^t=\sum_{j=1}^{n-1} q_{n-j+1}\ebold_j^\ast
$$
in the dual frame $\ebold^\ast$ to $\ebold$.
    In the frame $\vbold$, 
    $b=W^{-1} a W= (W^{-1}\ebold_n)\otimes q^t W $,
    where $W$ is as in \eqref{eqn:wronskian}.
    By Proposition \ref{prop:real}, $g\overline g=\epsilon I$, where
    $\epsilon^2=1$. Then:
    \begin{align*}
        i\tr(b\wedge g^{-1}\overline b g) &=
        i\tr\left((W^{-1}\ebold_n)\otimes q^t W \wedge
        g^{-1}(\overline{W^{-1}\ebold_n})\otimes \overline{q^t W} g
        \right) \\
        &=
        i\epsilon\tr\left((gW^{-1}\ebold_n)\otimes q^t W \wedge
        (\overline{gW^{-1}\ebold_n})\otimes \overline{q^t W} 
        \right)  \ .
    \end{align*}
    But in local coordinates, this last line may be written
    \begin{equation} \label{eqn:square} 
    =\epsilon \left| q^t W(\overline{gW^{-1}\ebold_n})\right|^2\, idz\wedge
    d\bar z \ .
    \end{equation} 
    This proves the semidefiniteness.  Assume now that
    $H_{\nabla}(b,b)=0$.  From \eqref{eqn:square}, 
$$q^t W(\overline{gW^{-1}\ebold_n})=0\ .$$
We wish to show that this implies $q=0$.
Using the induced connection on the dual bundle, along with
    $\overline\nabla=g(\nabla)$, we have
    \begin{align}
        0=d\left(q^t W(\overline{gW^{-1}\ebold_n})\right)
        &=d_\nabla(q^t W)(\overline{gW^{-1}\ebold_n})
        + (q^t W)(\overline{d_{\overline\nabla}(gW^{-1}\ebold_n}))\notag \\
        0&=d_\nabla(q^t W)(\overline{gW^{-1}\ebold_n})
        + (q^t W)(\overline{gd_{\nabla}(W^{-1}\ebold_n})) \label{eqn:key}
    \end{align}
    Taking the $(0,1)$ part in \eqref{eqn:key}, the first term vanishes
    because $q$ and $W$ are holomorphic. Since in the frame $\vbold$ the
    connection is trivial, we find that 
    $ (q^t W)(\overline{g{U'}})=0$,
    where $U=W^{-1}\ebold_n$.  Repeating this for the $k$-th derivatives
    $U^{(k)}$, we have 
    $q^tW(\overline{gU^{(k)}})=0$, $k=0,\ldots, n-1$.
    Now the result follows from Lemma \ref{lem:independent} below.
\end{proof}

\begin{lemma} \label{lem:independent}
    The vectors  $\{U,U', \ldots,  U^{(n-1)}\}$ form a basis at
    every point.
\end{lemma}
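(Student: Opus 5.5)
We need to show that $U=W^{-1}\ebold_n$ and its derivatives $U',\ldots,U^{(n-1)}$ span $\CBbb^n$ at each point. Here $U$ is the last column of $W^{-1}$, and derivatives are taken in the frame $\vbold$, where $\nabla$ is trivial. The idea is to use the duality between $W$ and its inverse. Write $W$ with rows $Y^{(j)}$, $j=0,\ldots,n-1$, where $Y=(y_1,\ldots,y_n)$. The identity $WW^{-1}=I$ says that the row vector $Y^{(j)}$ pairs with the column $U$ to give $\delta_{j,n-1}$. So
$$
Y^{(j)}U=0 \quad (0\le j\le n-2),\qquad Y^{(n-1)}U=1\ .
$$

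The plan is to differentiate these relations repeatedly and compute the matrix $M_{jk}:=Y^{(j)}U^{(k)}$ for $0\le j,k\le n-1$. By the Leibniz rule,
$$
Y^{(j)}U^{(k)}=\frac{d}{dz}\bigl(Y^{(j)}U^{(k-1)}\bigr)-Y^{(j+1)}U^{(k-1)}\ .
$$
Inducting on $k$, I expect to show that $M_{jk}=0$ whenever $j+k<n-1$, and that $M_{jk}=(-1)^k$ on the anti-diagonal $j+k=n-1$.

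The base case $k=0$ is the relation above. For the inductive step, take $j+k\le n-1$. Then $j+(k-1)<n-1$, so the first term on the right vanishes as the derivative of an identically zero function. The second term has $(j+1)+(k-1)=j+k$. It is therefore zero if $j+k<n-1$, and equals $(-1)^{k-1}$ if $j+k=n-1$, which gives the sign change. On the anti-diagonal one never needs $Y^{(n)}$, because $j+1\le n-1$ when $k\ge1$. So the ODE $DY=0$ does not enter at this stage.

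Hence $M=WV$, where $V=(U,U',\ldots,U^{(n-1)})$, is anti-triangular with entries $\pm1$ on the anti-diagonal. Its determinant is therefore $\pm1$. Since $\det W\neq 0$ (the Wronskian of the independent solutions $y_i$ is a nonzero constant, since $Q_1=0$), we get $\det V=\pm1/\det W\neq0$. This shows that $\{U,\ldots,U^{(n-1)}\}$ is a basis at every point.

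The main point needing care is the bookkeeping of which entries vanish identically, so that their derivatives also vanish, versus those that are merely constant. The induction above handles this by keeping only the region $j+k\le n-1$, where every entry is a constant.
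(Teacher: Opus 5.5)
Your argument is correct, and it is essentially the paper's proof in dual form. Since $WU_j=\ebold_j$ for the columns $U_j=W^{-1}\ebold_j$, your matrix $M=WV$ is exactly the coefficient matrix of the $U^{(k)}$ in the basis $\{U_j\}$. The paper shows this matrix is triangular with diagonal entries $(-1)^k$ using the recursion $U_j'=-U_{j-1}+Q_{n-j+1}U_n$ derived from $d(W^{-1})=W^{-1}A$, whereas you get the same vanishing pattern from the Leibniz rule and $WW^{-1}=I$ alone, with the small advantage that you never need the companion form of $A$ or the $Q_k$.
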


\begin{proof}
    By the discussion in \S \ref{sec:opers}, 
    we have $dW W^{-1}=-A$, or $d(W^{-1})=W^{-1}A$, where $A$ is given by
    \eqref{eqn:A}. For $j=1,\ldots, n$, set $U_j=W^{-1}\ebold_j$, so that $U=U_n$.  
    Then a computation shows that for all $j$,
    $$
     U_j^\prime=-U_{j-1}+Q_{n-j+1}U_n\ .
    $$
    where for convenience we set $Q_1=0$, $U_0=0$.
    It follows inductively that 
$$
    U_j^{(k)}=(-1)^kU_{j-k}+\sum_{i\geq n-i+1}f_i U_i \ .
    $$
    for functions $f_i$ involving $Q_j$'s and derivatives.
Hence, in particular, 
    \begin{equation} \label{eqn:matrix} 
    \left(\begin{matrix} U \\  U' \\ \vdots \\
    U^{(n-1)}\end{matrix}\right)=
    \left(\begin{matrix} 1 & 0 & \ldots & 0 \\
        0 & -1 & \ldots & 0 \\
        \ast &\ast & \ddots & \vdots \\
    \ast & \ast & \ast & (-1)^{n-1} \end{matrix}
    \right)
    \left(\begin{matrix} U_n \\  U_{n-1} \\ \vdots \\
    U_1\end{matrix}\right)
    \end{equation} 
Since $W$ is invertible, the vectors $U_1,\ldots, U_n$ are pointwise
    independent. Since the matrix in \eqref{eqn:matrix} is also invertible, 
    the result follows.
\end{proof}

\subsection{Conclusion of the proof}
     Let $[\nabla]\in \Op_n\cap \Mbold_n^\RBbb$. Since $\nabla$ is
    irreducible, $[\nabla]$ is a smooth point of $\Mbold_n$. Now
   since $\Mbold_n^{\RBbb}$ is the fixed point set of an
    involution, the real dimension of $\Mbold_n^{\RBbb}$ at $[\nabla]$ is
    half the real dimension of $\Mbold_n$. As mentioned above, $\Op_n$ is
    also half dimensional. 
      Let 
    $[b]\in T_{[\nabla]}\Op_n\cap  T_{[\nabla]}\Mbold_n^{\RBbb}$.
    Then by Lemma  \ref{lem:real}, $H_{\nabla}(b,b)=0$. But then by
    Proposition \ref{prop:real}, $[b]=0$.
    Hence,   $T_{[\nabla]}\Op_n\cap
    T_{[\nabla]}\Mbold_n^{\RBbb}=\{0\}$, and this proves the transversality  
Theorem \ref{thm:main}.  The discreteness, Corollary \ref{cor:main},
follows immediately.

\subsection*{AI declaration} The authors used AI only  to check for
mistakes and to build a formal verification 
dossier so that the Aristotle system could formally verify their proof. 

\subsection*{Acknowledgements} The authors would like to thank Pavel
Etingof for discussions.

 \bibliography{papers}
 \bibliographystyle{plain}
 \end{document}